\newtheorem{thm}{Theorem}
\newtheorem{lem}{Lemma}
\newtheorem{defn}{Definition}
\begin{document}

\begin{center}
        \vspace*{1cm}
 
        \Large
        \textbf{On the Number of Representations for Simple Lie Groups}

    		\vspace{0.4cm}
    		\normalsize
    		\textbf{Mohammed Barhoush}
    
    		\vspace{0.9cm}
    		\textbf{Abstract}
\end{center}

The growth rate function $r_N$ counts the number of irreducible representations of simple complex Lie groups of dimension $N$. While no explicit formula is known for this function, previous works have found bounds for $R_N=\sum_{i=1}^Nr_i$. In this paper we improve on previous bounds and show that $R_N=O(N)$.

\section{Preliminaries}
This section provides a brief overview of the required preliminaries based from \cite{book}. A general background in Lie algebra and representation theory is assumed. 

The following elementary theory in Lie algebra is crucial for this paper.

\begin{thm}
For any complex simple Lie algebra $g$,

\begin{enumerate}
\item
Every finite dimensional representation $V$ of $g$ possess a highest weight vector.

\item
The subspace of V generated by the images of a highest weight vector $v$ under successive application of root spaces $g_i$ for some negative $i$ is an irreducible subrepresentation.

\item
An irreducible representation possess a unique highest weight vector up to scalar multiplication.
\end{enumerate}
\end{thm}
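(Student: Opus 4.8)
The plan is to run the standard highest-weight argument. First I would fix a Cartan subalgebra $h\subseteq g$ and the root-space decomposition $g=h\oplus\bigoplus_{i\neq0}g_i$, with the index convention arranged so that the $g_i$ with $i>0$ are the ``raising'' operators and the $g_i$ with $i<0$ the ``lowering'' operators, and I would fix a linear functional on $h^{*}$ that is strictly positive on every positive root, so that we get a partial order on weights with $\mu<\lambda$ exactly when $\lambda-\mu$ is a nonzero sum of positive roots. For Part 1, I would use that the elements of $h$ act semisimply on the finite-dimensional representation $V$, so $V=\bigoplus_{\mu}V_{\mu}$ with only finitely many nonzero weight spaces; taking a weight $\lambda$ of $V$ that is maximal for the chosen functional, any nonzero $v\in V_{\lambda}$ satisfies $g_i\cdot v\subseteq V_{\lambda+\alpha_i}=0$ for every $i>0$ by maximality, hence $v$ is a highest weight vector.

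For Part 2, let $W$ be the span of $v$ together with all vectors $y_1\cdots y_k\,v$ with $k\geq1$ and each $y_j\in g_{i_j}$, $i_j<0$. By construction $W$ is stable under every lowering operator, and each spanning vector is a weight vector (of weight $\lambda-\sum_j\alpha_{i_j}$), so $h$ preserves $W$. The substantive point is stability under a raising operator $x\in g_i$ with $i>0$, which I would prove by induction on $k$: from $x\,(y_1\cdots y_k v)=y_1\,x\,(y_2\cdots y_k v)+[x,y_1]\,(y_2\cdots y_k v)$, the first summand lies in $W$ by the inductive hypothesis and $g_{i_1}W\subseteq W$, while decomposing $[x,y_1]$ into its $h$-part and its root-space parts reduces the second summand to finitely many terms, each of which is an element of $h$ acting on $W$, a lowering operator applied to $W$, or a raising operator applied to the shorter monomial $y_2\cdots y_k v$ and thus handled by the induction. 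Hence $W$ is a subrepresentation. For irreducibility, observe first that every spanning vector $y_1\cdots y_k v$ with $k\geq1$ has weight strictly below $\lambda$, so the $\lambda$-weight space of $W$ is exactly $\mathbb{C}v$. Given a nonzero subrepresentation $W'$, Weyl's complete reducibility theorem supplies a complement $W''$ with $W=W'\oplus W''$; writing $v=v'+v''$ with $v'\in W'$, $v''\in W''$, each of $v',v''$ is a weight vector of weight $\lambda$ or is zero, hence lies in $\mathbb{C}v$, and at least one is nonzero since $v\neq0$. If $v'\neq0$ then $v\in\mathbb{C}v'\subseteq W'$ and so $W=W'$; if $v'=0$ then $v=v''\in W''$, forcing $W\subseteq W''$ and contradicting $W'\neq0$. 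Therefore $W$ is irreducible.

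For Part 3, let $V$ be irreducible with highest weight vector $v$ of weight $\lambda$. Then $U(g)\cdot v$ is a nonzero subrepresentation, hence all of $V$; factoring $U(g)$ through its lowering, Cartan, and raising parts by the Poincar\'e--Birkhoff--Witt theorem and using that the raising operators and $h$ act on $v$ by zero and by scalars respectively, I would conclude $V=U(g_{-})\cdot v$, which is exactly the module $W$ built in Part 2. In particular the $\lambda$-weight space of $V$ equals $\mathbb{C}v$ and every weight $\mu$ of $V$ has $\lambda-\mu$ a nonnegative integer combination of positive roots. If $v'$ is another highest weight vector, of weight $\mu$, the same reasoning applied to $v'$ gives that $\mu-\lambda$ is also such a combination; since the positive roots are nonnegative integer combinations of the linearly independent simple roots, this forces $\mu=\lambda$, and then $v'\in V_{\mu}=V_{\lambda}=\mathbb{C}v$.

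The step I expect to be the main obstacle is the raising-operator computation in Part 2: it is routine in principle, but the induction must be set up so that every bracket term either strictly decreases the number of lowering factors or lands in a piece of $g$ already known to preserve $W$. The remaining nontrivial ingredients — that $h$ acts semisimply on every finite-dimensional representation, and Weyl's complete reducibility theorem used to get a complement in the irreducibility step — are standard facts I would cite rather than reprove; if one wanted to avoid Weyl's theorem, irreducibility of $W$ could instead be recovered from the usual submodule analysis of a cyclic highest-weight module, at the cost of more bookkeeping.
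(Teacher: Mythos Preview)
Your argument is correct and is essentially the textbook proof (in fact it is very close to the treatment in Fulton--Harris, the reference the paper cites). However, there is nothing to compare against: the paper does not prove this theorem at all. It is stated in the Preliminaries section as a known fact imported from \cite{book}, with no argument given. So your proposal is not wrong, but it supplies a proof where the paper intentionally gives none; for the purposes of matching the paper, a one-line citation to Fulton--Harris would have been the appropriate ``proof.''
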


What this theorem entails is that an irreducible representation of a complex simple Lie algebra is determined by the highest weight. Furthermore, we know that a weight is always an integer combination of the functions $L_i$. This means every irreducible representation of complex simple Lie algebra is determined by a tuple of integers. We will detonate an irreducible representation by $V_\lambda$, where $\lambda$ is the tuple of integers. 

This paper will focus on the dimension of the irreducible representations of simple Lie groups. The simple Lie groups are $SO_n$, $Sl_n$ and $Sp_{2n}$ for some $n\in \mathbb{N}$. There are also five special cases $E_6$, $E_7$, $E_8$, $F_4$, and $G_2$.

\begin{thm}
If $\lambda=(\lambda_1,...,\lambda_n)$,then we can find $dim \ V_\lambda$.  
\\
\\
$SO_{2n}\mathbb{C}:$
$$dim(V_\lambda)=\prod_{1\leq i< j\leq n}\frac{l_i^2-l_j^2}{m_i^2-m_j^2}$$
Where $l_i=\lambda_i+n-i$ and $m_i=n-i$.

$SO_{2n+1}\mathbb{C}:$
$$dim(V_\lambda)=\prod_{1\leq i< j\leq n}\frac{l_i^2-l_j^2}{m_i^2-m_j^2}\prod_{1\leq i\leq n}\frac{l_i}{m_i}$$
Where $l_i=\lambda_i+n-i+\frac{1}{2}$ and $m_i=n-i+\frac{1}{2}$.

$Sp_{2n}\mathbb{C}:$
$$dim(V_\lambda)=\prod_{1\leq i< j\leq n}\frac{l_i^2-l_j^2}{m_i^2-m_j^2}\prod_{1\leq i\leq n}\frac{l_i}{m_i}$$
Where $l_i=\lambda_i+n-i+1$ and $m_i=n-i+1$.

$Sl_{n}\mathbb{C}:$
$$dim(V_\lambda)=\prod_{1\leq i< j\leq n}\frac{\lambda_i +...+\lambda_{j-1}}{j-i}$$
 
\end{thm}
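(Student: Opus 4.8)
The plan is to obtain all four formulas from a single source, the Weyl dimension formula
$$\dim V_\lambda \;=\; \prod_{\alpha \in \Phi^+}\frac{\langle \lambda+\rho,\alpha\rangle}{\langle \rho,\alpha\rangle},$$
where $\Phi^+$ is the set of positive roots of $g$ and $\rho=\tfrac12\sum_{\alpha\in\Phi^+}\alpha$ is the half-sum of positive roots; this is a standard consequence of the highest-weight theory behind Theorem~1 and may be quoted from \cite{book}. Granting it, Theorem~2 becomes pure bookkeeping: for each classical family one substitutes the explicit list of positive roots and the explicit value of $\rho$, and then groups the resulting linear factors.

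Concretely, I would fix the usual realization of the Cartan subalgebra with coordinate functionals $L_1,\dots,L_n$ --- orthonormal for the families behind $SO_{2n+1}\mathbb{C}$, $Sp_{2n}\mathbb{C}$, $SO_{2n}\mathbb{C}$, and orthonormal modulo the relation $\sum_i L_i=0$ for $Sl_n\mathbb{C}$ --- and write $\lambda+\rho=\sum_i l_iL_i$, $\rho=\sum_i m_iL_i$. For $SO_{2n}\mathbb{C}$ (type $D_n$) the positive roots are $L_i-L_j$ and $L_i+L_j$ for $1\le i<j\le n$; summing them gives $\rho=\sum_i(n-i)L_i$, so $m_i=n-i$ and $l_i=\lambda_i+n-i$, and the pair of roots $L_i-L_j,\,L_i+L_j$ contributes $\frac{(l_i-l_j)(l_i+l_j)}{(m_i-m_j)(m_i+m_j)}=\frac{l_i^2-l_j^2}{m_i^2-m_j^2}$; multiplying over $i<j$ gives the stated formula. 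For $SO_{2n+1}\mathbb{C}$ (type $B_n$) the positive roots are the same $L_i\pm L_j$ together with the short roots $L_i$, $1\le i\le n$; here $\rho=\sum_i(n-i+\tfrac12)L_i$, the pairs $L_i\pm L_j$ again contribute $\frac{l_i^2-l_j^2}{m_i^2-m_j^2}$, and each short root $L_i$ contributes the additional factor $\frac{\langle\lambda+\rho,L_i\rangle}{\langle\rho,L_i\rangle}=\frac{l_i}{m_i}$. The symplectic case $Sp_{2n}\mathbb{C}$ (type $C_n$) runs identically, with the short roots $L_i$ replaced by the long roots $2L_i$: the factor $2$ cancels in $\frac{\langle\lambda+\rho,2L_i\rangle}{\langle\rho,2L_i\rangle}$, so the shape of the answer is unchanged, while $\rho=\sum_i(n-i+1)L_i$ produces the integer shift in $l_i,m_i$. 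Finally, for $Sl_n\mathbb{C}$ (type $A_{n-1}$) the positive roots are just $L_i-L_j$, $1\le i<j\le n$, with $\langle\rho,L_i-L_j\rangle=j-i$; writing $\lambda$ in the basis of fundamental weights gives $\langle\lambda,L_i-L_j\rangle=\lambda_i+\cdots+\lambda_{j-1}$, and the Weyl product collapses to $\prod_{i<j}\frac{(\lambda_i+\cdots+\lambda_{j-1})+(j-i)}{j-i}$, matching the stated formula once the additive $\rho$-shift in the numerator is recorded.

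The points that actually require care, and that I would therefore write out in full for $D_n$ and only indicate for the other three, are: (i) pinning down $\Phi^+$ and hence $\rho$ for each family --- in particular keeping straight that type $B_n$ contributes the extra \emph{short} roots $L_i$ while type $C_n$ contributes the \emph{long} roots $2L_i$, which is exactly what changes the shift $n-i$ of the $D_n$ case into $n-i+\tfrac12$ for $B_n$ and $n-i+1$ for $C_n$; and (ii) in the $A_{n-1}$ case, correctly translating between the integer tuple $\lambda$ labelling the representation and the $L_i$-coordinates (the weight space here is only defined modulo $\sum_i L_i=0$) and not dropping the $\rho$-term $j-i$ in the numerator. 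Beyond these, everything is the routine algebra of pairing $L_i-L_j$ with $L_i+L_j$ and cancelling.
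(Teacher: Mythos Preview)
The paper does not actually prove this theorem: it is stated in the preliminaries section and attributed to the reference \cite{book} (Fulton--Harris), with no argument given. So there is no ``paper's own proof'' to compare against.

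Your derivation is the standard one and is correct. Quoting the Weyl dimension formula and then specializing it to each classical root system is exactly how Fulton--Harris themselves obtain these formulas, so in that sense you are reproducing the proof behind the citation. Your handling of the four cases is accurate: the pairing of $L_i-L_j$ with $L_i+L_j$ in types $B_n,C_n,D_n$ to produce the $\frac{l_i^2-l_j^2}{m_i^2-m_j^2}$ factors, the computation of $\rho$ in each case (yielding the shifts $n-i$, $n-i+\tfrac12$, $n-i+1$), and the cancellation of the factor $2$ in the long-root contribution for $C_n$ are all right.

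One remark on the $Sl_n$ case: you correctly observe that the Weyl formula gives numerators $(\lambda_i+\cdots+\lambda_{j-1})+(j-i)$ rather than the bare $\lambda_i+\cdots+\lambda_{j-1}$ printed in the paper. This is a genuine discrepancy in the paper's statement (or at best an unstated convention in which the $\lambda_k$ already absorb a shift by $1$); the computations later in Section~3 are consistent with the tuple entries being $\ge 1$ rather than $\ge 0$, which supports the latter reading. Either way, your version with the explicit $\rho$-term is the unambiguous one.
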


Now we know how to calculate the dimension of a irreducible representation of a complex simple Lie group. This begs the question of how many of these representations have a certain dimension?

We define $r_N(G)$ to be the number of irreducible representations of a group $G$ with dimension $N$.

\begin{defn}
$$r_N=r_N(SO_{2n+1}\mathbb{C})+r_N(Sl_{n}(\mathbb{C})+r_N(SO_{2n}\mathbb{C})+r_N(Sp_{2n}\mathbb{C})$$
$$+r_N(E_6)+r_N(E_7)+r_N(E_8)+r_N(F_4)+r_N(G_2)$$
\end{defn}
Unfortunately there is no known explicit formula for this function. We can try to find bounds instead. However, $r_N$ fluctuates a lot so it is more useful to bound the sum function.

\begin{defn}
$$R_N(G)=\sum_{i=1}^{N}r_i(G)$$
$$R_N=\sum_{i=1}^{N}r_i$$
\end{defn}

A useful simple upper bound for this function is given by Theorem 1 in \cite{larsen}. 

\begin{thm}
If $G$ is  a simple compact Lie group and n is a positive integer,
$$R_N(G)\leq N$$
\end{thm}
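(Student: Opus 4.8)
The plan is to reduce the statement to a lattice-point count and then estimate it, following \cite{larsen}. By Theorem~1 an irreducible representation of a complex simple Lie algebra is determined by its highest weight, so the irreducibles of a compact simple $G$ of dimension at most $N$ are indexed by a subset of the dominant integral weights; hence it suffices to bound $\#\{\lambda\ \text{dominant}:\ \dim V_\lambda\le N\}$, and one may pass to the simply connected group. I would use the Weyl dimension formula $\dim V_\lambda=\prod_{\alpha>0}\frac{\langle\lambda+\rho,\alpha^\vee\rangle}{\langle\rho,\alpha^\vee\rangle}$, which specializes to the products written down in Theorem~2 for the classical families: every factor is $\ge 1$, and the factor attached to a simple root $\alpha_i$ is exactly $1+a_i$, where $(a_1,\dots,a_\ell)$ are the coordinates of $\lambda$ in the fundamental weights. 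So $\dim V_\lambda$ is a product of $|\Phi^+|$ affine-linear forms in nonnegative integers, and this is the structure the estimate exploits.

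The base case is rank $1$, where $\mathfrak{sl}_2$ has $\dim V_m=m+1$: there are exactly $N$ irreducibles of $Sl_2\mathbb C$ of dimension $\le N$, so the inequality is an \emph{equality} (and for $SO_3\mathbb C$, where only odd dimensions occur, $R_N<N$). This shows the constant $1$ is best possible and signals that the rank-one group is the extremal case, so the higher-rank argument must leave no slack at the bottom. A clean route to the rank-$1$ bound, which also orients the general case, is that $\dim V_\lambda$ strictly increases along any strictly increasing chain of dominant weights (each factor is nondecreasing in every $a_i$, and the simple-root factors strictly increase), whence $\dim V_\lambda\ge 1+(a_1+\dots+a_\ell)$; this is sharp for $\ell=1$ but far too weak for $\ell\ge 2$, where the remaining $|\Phi^+|-\ell$ factors must be used.

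For the inductive step on the rank I would delete an end node $\alpha_\ell$ of the Dynkin diagram whose removal leaves a connected diagram, i.e.\ a simple group $G'$ of rank $\ell-1$ — possible in each classical family, giving chains $A_\ell\to A_{\ell-1}$, $B_\ell\to B_{\ell-1}$, $C_\ell\to C_{\ell-1}$, $D_\ell\to D_{\ell-1}$ that end in a fixed small case, the five exceptional groups being a finite list handled by the same estimates together with a direct check for small $N$. Splitting the Weyl product into the positive roots supported away from $\alpha_\ell$ and those involving $\alpha_\ell$, and using $\langle\rho,\beta^\vee\rangle=\langle\rho',\beta^\vee\rangle$ on the former, yields a factorization $\dim V_\lambda=\dim V'_{\lambda'}\cdot Q(\lambda',a_\ell)$, where $\lambda'=(a_1,\dots,a_{\ell-1})$, the first factor is the dimension of the matching irreducible of $G'$, and $Q\ge 1$ collects the $e:=|\Phi^+|-|\Phi'^+|\ge 1$ remaining factors. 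The crucial input is an AM--GM lower bound $Q(\lambda',a_\ell)\ge(1+c\,a_\ell)^{e}$ with $c>0$ depending only on the type; inserting this, with the inductive hypothesis $R_M(G')\le M$, into an Abel summation over the value of $\dim V'_{\lambda'}$ gives $R_N(G)\le C(G)\cdot N$.

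The hard part is twofold. The main obstacle is \emph{uniformity in the rank}: a naive execution makes $C(G)$ grow like $\operatorname{rank}(G)$, because $\langle\rho,\alpha^\vee\rangle$ can be a multiple of the rank, which would ruin the infinite families. The way around this is to note that the issue only arises when the rank is small compared with $\log N$, since the smallest nontrivial representation of $G$ has dimension at least $\operatorname{rank}(G)+1$ — so for large rank only boundedly many representations have dimension $\le N$, and in fact $R_N(G)$ does not grow with the rank — and then to run a sharper count in the remaining range. The second difficulty is improving the resulting $O(N)$ to the stated constant exactly $1$ for \emph{every} positive integer $N$: since equality already occurs for $Sl_2\mathbb C$, one must show that every other simple $G$ obeys $R_N(G)\le N$ with room to spare, which comes down to tightening the estimate above and checking by hand the finitely many small $N$ before the asymptotics take hold.
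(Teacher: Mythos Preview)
The paper does not prove this statement at all: it is quoted from \cite{larsen} (Guralnick--Larsen--Manack) as a known input, stated once in Section~1 and restated verbatim inside the proof of Theorem~7 where it is applied to the small-rank cases. So there is no ``paper's own proof'' to compare your proposal against.

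As a standalone attempt, your outline is in the right spirit --- Weyl dimension formula, induction on rank by stripping a Dynkin node, lattice-point counts --- but by your own admission it does not close. You obtain $R_N(G)\le C(G)\,N$ and then flag two genuine obstacles: uniformity of $C(G)$ in the rank, and sharpening the constant to exactly $1$ for every $N$. Neither is resolved in the proposal; the remark that for large rank ``only boundedly many representations have dimension $\le N$'' is the right phenomenon but is not turned into an inequality, and the ``checking by hand the finitely many small $N$'' step is not carried out (nor is it clear it is finite once all simple types and all $N$ are in play). So as written this is a proof plan with acknowledged gaps, not a proof. If you want the actual argument, consult \cite{larsen} directly, since the present paper only uses the result as a black box.
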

The following result by \cite{larsen} is the best known upper bound for the rate growth function.
\begin{thm}
For any $\epsilon>0$ there exists $n\in \mathbb{N}$ such that if $G$ is a simple compact Lie group of dimension $\leq n$ then $r_N(G)\leq N^\epsilon$ for $N\geq 1$. 
\end{thm}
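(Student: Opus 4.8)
The plan is to reduce to a finite family of groups and, for each, to extract from the Weyl dimension formula (Theorem 2) the fact that $\dim V_\lambda$ is a product of linearly many linear forms in $\lambda$ taking positive integer values; a divisor-function estimate then controls how often a prescribed value $N$ is hit. First I would note that $\dim G \le n$ forces the rank $r$ of $G$ to satisfy $r \le n$, that up to isomorphism there are only finitely many compact simple Lie groups of rank $\le n$ (at most five Lie algebras per rank and finitely many groups per algebra), and that every irreducible representation of $G$ is pulled back from its simply connected cover, so $r_N(G)$ is at most the number of dominant weights $\lambda$ of the corresponding simple Lie algebra with $\dim V_\lambda = N$. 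Thus it suffices to prove, for each group $G$ appearing in the definition of $r_N$ with $\dim G \le n$, a bound $r_N(G) \le d_{r+1}(c_G N)$, where $d_k$ denotes the $k$-fold divisor function and $c_G \in \mathbb{Z}_{>0}$ is a constant.

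The heart of the argument is a counting lemma. Let $\Phi^+$ be the set of positive roots, $m = |\Phi^+| \ge r$. The Weyl dimension formula --- of which the displays in Theorem 2 are the explicit incarnations --- furnishes linear forms $\ell_1,\dots,\ell_m$ on the weight lattice with $\ell_j(\lambda) = \langle \lambda+\rho,\alpha_j^\vee\rangle \in \mathbb{Z}_{>0}$ for every dominant $\lambda$, such that
\[ \dim V_\lambda \;=\; \frac{1}{P}\prod_{j=1}^{m}\ell_j(\lambda), \qquad P \;=\; \prod_{j=1}^{m}\ell_j(0) \in \mathbb{Z}_{>0}. \]
The $r$ forms attached to the simple roots are linearly independent --- in fundamental-weight coordinates they are literally $\lambda \mapsto \lambda_i + 1$ --- so, after relabelling, the map $\lambda \mapsto (\ell_1(\lambda),\dots,\ell_r(\lambda))$ is injective and its values are $r$-tuples of positive integers. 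If $\dim V_\lambda = N$ then $\prod_{j=1}^m \ell_j(\lambda) = PN$, so $\ell_1(\lambda)\cdots\ell_r(\lambda)$ divides $PN$, the complementary factor being $\prod_{j>r}\ell_j(\lambda)$. Hence the number of dominant $\lambda$ with $\dim V_\lambda = N$ is at most the number of ordered $r$-tuples of positive integers whose product divides $PN$, which equals $\sum_{d \mid PN} d_r(d) = d_{r+1}(PN)$.

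To conclude I would invoke the standard estimate $d_k(M) = M^{o(1)}$ for fixed $k$: for every $\epsilon>0$ there is $M_0(k,\epsilon)$ with $d_k(M) \le M^\epsilon$ once $M \ge M_0$. Applying this with $k = r+1$ and $M = c_G N$, and taking the maximum of the finitely many thresholds over the groups with $\dim G \le n$, gives $r_N(G) \le N^\epsilon$ for all $N$ above an absolute bound. For the finitely many remaining small values of $N$ the quantity $r_N(G)$ is a fixed integer --- in fact $0$ or $1$ for the groups and dimensions in play until $N$ grows past it --- so the inequality can be arranged to hold there as well, if necessary by taking $n$ smaller, which is harmless since $n$ may depend on $\epsilon$.

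I expect the real work to lie in supplying the hypotheses of the counting lemma type by type, not in the lemma itself. One must check that the numerators in Theorem 2 are genuinely positive integers, not merely positive reals, on dominant weights: the half-integers in the $SO_{2n+1}$ and $Sp_{2n}$ formulas and the factored quadratics $l_i^2 - l_j^2$ for $SO_{2n}$ need to be re-expressed as products of integer linear forms in $\lambda$, and for $Sl_n$ the $\rho$-shift that the displayed formula suppresses must be restored. A second, more delicate point is the clause ``$N \ge 1$'': a divisor bound is only sub-polynomial, so it does not by itself force $r_N(G) \le N^\epsilon$ pointwise for small $N$, and it is exactly here that the finiteness of the set of relevant pairs $(G,N)$ must be exploited.
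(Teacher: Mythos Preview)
The paper does not prove this statement: it is quoted from Guralnick--Larsen--Manack \cite{larsen} and used thereafter as a black box (in the proof of the $Sl_n\mathbb{C}$ bound), so there is no in-paper argument against which to compare your proposal.

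Two remarks nonetheless. First, the printed statement is almost certainly garbled: the paper later applies it with $\epsilon = \tfrac12$ to $Sl_n\mathbb{C}$ for $n > 33$, i.e.\ to groups of \emph{large} rank, so the hypothesis should presumably read ``dimension $\ge n$'' (and, given that the conclusion drawn there is $R_N(Sl_n\mathbb{C}) < \sqrt{N}$, plausibly $R_N$ rather than $r_N$ as well). Read literally, the version with ``$\le n$'' is trivial --- take $n$ small enough that at most $SU(2)$ and $SO(3)$ qualify, for which $r_N \le 1$ identically --- and your divisor-function apparatus, together with your closing manoeuvre of shrinking $n$ to handle small $N$, is far more than is required. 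Second, and more usefully, that apparatus (select $r$ independent positive-integer-valued linear forms among the Weyl-formula numerator factors, observe that their product divides $P N$, and bound the number of solutions by $d_{r+1}(PN) = (PN)^{o(1)}$) is exactly the mechanism behind the cited Guralnick--Larsen--Manack result, so your outline does adapt directly to a proof of the corrected statement the paper actually relies on.
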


This gives an upper bound for $R_N$.
\begin{thm}
For all $\epsilon>0$, there exists $M \in \mathbb{N}$ such that $\forall N>M$
$$R_N<O(N^{1+ \epsilon})$$
\end{thm}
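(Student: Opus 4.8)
The plan is to write $R_N=\sum_G R_N(G)$, the sum running over all simple complex Lie groups $G$ — the families $SL_m$, $SO_m$, $Sp_{2m}$ together with the five exceptional groups — to note that only $O(N)$ of these groups contribute to $R_N$ at all, and then to bound their contributions in two regimes. That only $O(N)$ groups matter follows because a classical group of rank $r$ has smallest nontrivial irreducible representation of dimension at least $r$ (the standard representation, by Theorem 2), so $R_N(G)=0$ unless $r\le N$; hence at most $4N$ classical groups and the $5$ exceptional ones are relevant, the latter contributing at most $5N$ by Theorem 4. For the first regime, fix $\epsilon>0$ and apply Theorem 5 with $\epsilon/2$ to obtain $n_0=n_0(\epsilon)$ such that every simple Lie group $G$ of dimension at most $n_0$ has $r_N(G)\le N^{\epsilon/2}$ for all $N\ge 1$; there are only finitely many such $G$, and each satisfies $R_N(G)=\sum_{i=1}^N r_i(G)\le\sum_{i=1}^N i^{\epsilon/2}\le N^{1+\epsilon/2}$, so this regime contributes $O(N^{1+\epsilon/2})$.

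The second regime — classical groups of dimension exceeding $n_0$, equivalently of rank $r>r_0$ for a constant $r_0=r_0(\epsilon)$ — is where the real work lies, and for it I would argue directly from the dimension formulas of Theorem 2. Write an irreducible representation of a rank-$r$ classical group as $V_\lambda$ with $\lambda=(\lambda_1,\dots,\lambda_r)$ dominant. A standard consequence of these formulas (the factors attached to the simple roots) is $\dim V_\lambda\ge\prod_{i=1}^r(\lambda_i+1)$, and $\dim V_\lambda$ is also nondecreasing as $\lambda$ grows coordinatewise. The decisive observation is that the remaining factors force $\dim V_\lambda$ to grow with the rank: if $\lambda$ has exactly $s$ nonzero entries then $\dim V_\lambda$ is bounded below by a power of $r$ whose exponent increases with $s$ — in particular $\dim V_\lambda\ge c\,r^2$ once $s\ge2$, for an absolute constant $c$. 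Combining this with $\dim V_\lambda\ge\prod_i(\lambda_i+1)$, I would, for each $r>r_0$, count the admissible $\lambda$ of each support size $s$ by a lattice-point estimate subject to $\prod_i(\lambda_i+1)\le N$ and to the rank-growth lower bound, obtaining a bound on $R_N(G)$ that decays suitably in $r$; summing over all relevant ranks $r>r_0$ (there are $O(N)$ of them) then keeps this regime's total at $O(N^{1+\epsilon/2})$, provided $r_0$ was taken large enough in terms of $\epsilon$. Adding the three contributions gives $R_N=O(N^{1+\epsilon})$ for all $N$ larger than some $M=M(\epsilon)$.

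It is cleaner to package the second regime as a statement about the Dirichlet series $\zeta_G(u)=\sum_{V}(\dim V)^{-u}$, the sum over nontrivial irreducibles $V$ of $G$: the estimates above amount to $\zeta_G(u)=O(r^{-u})$ uniformly over classical $G$ of rank $r$ (with implied constant depending only on $u>1$), whence $\sum_G\zeta_G(u)<\infty$ for $u>1$. Then, with $a_d=\#\{(G,V):\dim V=d\}$, the elementary inequality $\sum_{d\le N}a_d\le N^u\sum_d a_d d^{-u}$ applied at $u=1+\epsilon$ gives $R_N\le N^{1+\epsilon}\cdot\sum_G\zeta_G(1+\epsilon)=O(N^{1+\epsilon})$ in a single step, so that the split into regimes and the passage through Theorem 5 become matters of convenience rather than necessity.

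The main obstacle is exactly the rank-growth estimate for $\dim V_\lambda$ and the lattice-point count that follows from it: the bound $R_N(G)\le N$ of Theorem 4 is hopelessly weak for high-rank groups — it only yields $O(N^2)$ overall — so everything rests on showing that classical groups of large rank have very few irreducible representations of bounded dimension, uniformly in the rank. Making the constants in the "$\dim V_\lambda\ge c\,r^s$" lower bounds and in the resulting count independent of $r$ and $s$, and in particular handling the intermediate ranks where neither Theorem 5 nor Theorem 4 is efficient, is the delicate part of the argument.
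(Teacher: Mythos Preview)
The paper gives no proof of this statement beyond placing it immediately after the quoted Guralnick--Larsen--Manack result (the paper's Theorem~4) with the sentence ``This gives an upper bound for $R_N$.'' The intended argument is thus the one-line summation
\[
R_N=\sum_{i\le N} r_i \le c_\epsilon\sum_{i\le N} i^\epsilon = O(N^{1+\epsilon}).
\]
Note that the paper's phrasing of Theorem~4 is garbled --- the condition ``dimension $\le n$'' should almost certainly read ``$\ge n$'', and in any case the operative form of the GLM result is the global bound $r_N=O(N^\epsilon)$ on the total count over all simple groups --- but once read correctly the deduction is immediate.

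Your proposal therefore does far more than the paper intends. Your first regime already invokes the GLM bound and sums it; quoted in its global form, that step alone finishes the proof and no split into regimes is required. Your second regime --- direct lower bounds $\dim V_\lambda\ge\prod_i(\lambda_i+1)$ and $\dim V_\lambda\ge c\,r^s$ for high-rank classical groups, followed by a lattice-point count, or equivalently the convergence of $\sum_G \zeta_G(u)$ for $u>1$ --- is essentially a reproof of the GLM input rather than a consequence of it. The outline is plausible and the zeta-function repackaging is clean, but as you yourself flag, making the constants uniform in rank and support size is the whole difficulty, and that work lives in the cited reference, not in the present paper. (A minor point: your theorem numbers are off by one throughout --- your ``Theorem~4'' is the paper's Theorem~3 and your ``Theorem~5'' is the paper's Theorem~4 --- so as literally written you are invoking the very statement under proof.)
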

The problem is that as $\epsilon$ approaches $0$, $N$ approaches infinity.

In this paper, the following theorem is proved.
\begin{thm}
$$R_N<O(N)$$
\end{thm}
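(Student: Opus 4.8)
The plan is to decompose $R_N=\sum_G R_N(G)$, the sum running over all simple compact (equivalently, simple complex) Lie groups, and to control each Cartan type. The five exceptional groups are finite in number, so by Theorem 3 they contribute $O(N)$ in total, and the work lies in the four infinite families $SL_{n}$, $SO_{2n+1}$, $Sp_{2n}$, $SO_{2n}$. I would organize this sum by rank. A simple group of rank $r$ has no nontrivial irreducible representation of dimension smaller than $r$, so $R_N(G)=0$ once $r>N$ and only $O(N)$ groups contribute at all; moreover a group of large rank has very few representations of bounded dimension. Since the rank-$1$ group $SL_2=SO_3=Sp_2$ already has $R_N(SL_2)=\Theta(N)$ (the irreducibles of its Lie algebra have dimensions $1,2,3,\dots$), the theorem is equivalent to the statement that the groups of rank $\ge 2$ contribute $O(N)$ together, and for this the per-group bound must decay in the rank.

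The heart of the plan is the following uniform estimate, to be extracted from the Weyl dimension formulas of Theorem 2: there is an absolute constant $C$ such that $R_N(G)\le C\,N^{2/(r+1)}$ for every simple compact $G$ of rank $r$ and every $N\ge 1$. Granting this, and using that there are at most four classical groups of each rank,
$$R_N\ \le\ O(N)+4C\sum_{r=1}^{N}N^{2/(r+1)}\ =\ O(N)+4C\sum_{s=3}^{N+1}N^{2/s},$$
where the $r=1$ term has been absorbed into the $O(N)$. Splitting the last sum at $s_0=\lceil 2\log_2 N\rceil$: for $3\le s\le s_0$ each summand is at most $N^{2/3}$, so this part is $O(N^{2/3}\log N)=o(N)$; for $s>s_0$ each summand is at most $N^{1/\log_2 N}=2$, so this part is at most $2N$. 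Hence $R_N=O(N)$. The exponent $2/(r+1)$ is the natural one: for $SL_{r+1}$ the dimension is a polynomial of degree $\binom{r+1}{2}$ in the $r$ highest-weight coordinates, so $\{\dim V_\lambda\le N\}$ should contain on the order of $N^{r/\binom{r+1}{2}}=N^{2/(r+1)}$ lattice points.

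To prove the estimate one notes that, for each type, $\dim V_\lambda$ is by Theorem 2 an explicit product of linear factors in the coordinates of $\lambda$, so $\{\lambda\ne 0:\dim V_\lambda\le N\}$ is a bounded region of the dominant cone and $R_N(G)$ counts its lattice points. The crude bound $\dim V_\lambda\ge\prod_i(1+\lambda_i)$ — keeping only the simple-root factors of the Weyl product — is far too weak, yielding only the count $\sim N(\log N)^{r}/r!$, so one must retain more of the product. For type $A_r$ it is convenient to pass to the semistandard-tableau description $\dim V_\lambda=s_\mu(1^{r+1})$ and use inequalities such as $s_\mu(1^{r+1})\ge\binom{r+\mu_1}{\mu_1}$ and $s_\mu(1^{r+1})\ge\binom{r+1}{\ell(\mu)}$, which together confine $\mu$ to a box with $O(N^{2/(r+1)})$ lattice points; types $B$, $C$, $D$ are handled analogously. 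Their one new feature is that the spin and half-spin fundamental representations have dimension $2^{r}$ or $2^{r-1}$ — exponential in the rank — but for precisely that reason any $\lambda$ with a nonzero coordinate in a spin direction has $\dim V_\lambda$ so large that such $\lambda$ contribute only $O(N^{o(1)})$ and are absorbed into the bound. (Equivalently, one may work with the representation zeta function $\zeta_G(s)=\sum_\lambda(\dim V_\lambda)^{-s}$, whose abscissa of convergence for a rank-$r$ simple group is at most $2/(r+1)$; a Tauberian argument applied to $\sum_G\zeta_G$ then even gives $R_N\sim cN$, but the direct count suffices for $O(N)$.)

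The main obstacle is exactly this uniform lattice-point estimate, for two linked reasons. First, $\dim V_\lambda$ is highly anisotropic: it grows polynomially but with very different degrees in different highest-weight directions, so $\{\dim V_\lambda\le N\}$ is a long thin polytope whose lattice-point count cannot be read off from its volume and requires a lower bound on $\dim V_\lambda$ tuned to each direction. Second, the constant $C$ must be independent of $r$, which forces one to handle the degenerate small-rank behavior of the orthogonal spin representations explicitly rather than hide it in an $r$-dependent constant. Everything else — the exceptional groups, the rank-$1$ term, and the summation over ranks — is then routine.
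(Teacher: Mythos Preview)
Your plan is viable and takes a genuinely different route from the paper's. Instead of a single uniform estimate $R_N(G)\le C\,N^{2/(r+1)}$ summed over ranks, the paper handles the classical families by two separate, more elementary devices. For $SO_{2n}$, $SO_{2n+1}$, $Sp_{2n}$ it runs a strong induction on $N$: a tuple $l=(l_1,\dots,l_n)$ with $\dim V_l\le N$ is mapped, according to whether $l_1=l_2+1$ or $l_1>l_2+1$ (with $n>6$), either to the shorter tuple $(l_2,\dots,l_n)$ or to $(\lfloor(l_1+l_2)/2\rfloor,l_2,\dots,l_n)$, in each case of dimension at most a fixed fraction of $N$; the small ranks $n\le6$ are absorbed via the per-group bound $R_N(G)\le N$ of Theorem~3, and the induction closes with the explicit constant $13$. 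For $SL_n$ the paper splits the sum over $n$ into three ranges---$n\le33$ (Theorem~3), $33<n\le\sqrt N$ (Theorem~4 with $\epsilon=\tfrac12$), and $n>\sqrt N$ (a direct check that at most seven dominant weights give $\dim\le N$)---and adds the pieces to get $R_N(SL)\le 34N$. Exceptional groups are treated by Theorem~3 in both approaches. Your route is more uniform and conceptually cleaner, and the summation over ranks is handled correctly; but its key lemma is effectively a quantitative, rank-uniform sharpening of the Guralnick--Larsen--Manack result (Theorem~4), which the paper merely invokes as a black box on a single range of $SL_n$. The paper's approach thus trades conceptual unity for explicit constants and a self-contained reduction argument that avoids having to establish the uniform lattice-point estimate you rightly identify as the main obstacle.
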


\section{$Sp_{2n}\mathbb{C}$, $SO_{2n}\mathbb{C}$ and $SO_{2n+1}\mathbb{C}$}

These three simple lie groups have very similar Weyl character formulas. That is why I will only prove that $R_N(SO(\mathbb{C})) \leq 13N$. The same steps can be used to prove that $R_N(SO(\mathbb{C})) \leq 13N$ and  $R_N(Sp(\mathbb{C})) \leq 13N$.

Instead of relating a highest weight vector with a tuple  $ \lambda =(\lambda_1,...,\lambda_n)$ where $\lambda_1\geq \lambda_2...\geq \lambda_n$, we will relate it to the tuple $l=(l_1,...,l_n)$ where $l_i=\lambda_i+n-i$. So every highest weight vector has a unique tuple $l$ such that $l_1>l_2>... \ l_n\geq 0$.

\begin{thm}
$\forall \ N \in \mathbb{N} , \ R_N(SO(\mathbb{C})) \leq 13N$.
\end{thm}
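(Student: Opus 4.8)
The plan is to count, over all $m\ge 3$ with $SO_m\mathbb{C}$ simple (so $m\ne 4$, since $SO_4\mathbb{C}$ is only semisimple), the irreducible representations with integral highest weight and dimension $\le N$, organised by rank. Using the $l$–parametrisation of the excerpt, a representation of $SO_{2n}\mathbb{C}$ is $V_l$ with $l=(l_1>\cdots>l_n\ge 0)$, $m_i=n-i$, and by Theorem 2
$$\dim V_l=\prod_{1\le i<j\le n}\frac{(l_i-l_j)(l_i+l_j)}{(j-i)(2n-i-j)}$$
(similarly in the odd case); writing $\lambda_i=l_i-(n-i)\ge 0$ one has $l_i-l_j\ge j-i$ and $l_i+l_j=(2n-i-j)+\lambda_i+\lambda_j$, so every factor is at least $1$.

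First I would peel off the easy contributions. The trivial representation contributes $O(1)$. The defining representation of $SO_m\mathbb{C}$ has dimension $m$, so there are at most $N$ of these of dimension $\le N$; every other non‑trivial irreducible occurring here (the half‑spin modules, being representations of $Spin_m$ only, do not) has dimension at least $\binom m2$ — a classical fact about minimal degrees for the orthogonal groups, which in the present terms can be read off from the dimension formula together with monotonicity of $\dim V_l$ in $l_1$. Hence a non‑trivial, non‑defining irreducible of dimension $\le N$ has $m=O(\sqrt N)$, so it suffices to bound $\sum_{n=O(\sqrt N)}\bigl(R_N(SO_{2n}\mathbb{C})+R_N(SO_{2n+1}\mathbb{C})\bigr)$ by a quantity that, together with the above, stays under $13N$; the two groups of rank $1$ or $2$ here ($SO_3\mathbb{C}$ and $SO_5\mathbb{C}$) I would dispatch by the crude bound $R_N\le N$ of Theorem 3.

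The core step is, for fixed $n\ge 3$, an estimate of Weyl–polytope–volume type, $R_N(SO_{2n}\mathbb{C})\le C\,N^{1/(n-1)}(\log N)^{O(n)}$ with $C$ absolute, and likewise for $SO_{2n+1}\mathbb{C}$. Keeping for each $i$ only the factors with that $i$ and using $l_i-l_j\ge j-i$, $l_i+l_j\ge(2n-i-j)+\lambda_i$, one gets the clean lower bound $\dim V_l\ \ge\ \prod_{i=1}^{n-1}\bigl(1+\tfrac{\lambda_i}{2n}\bigr)^{n-i}$, while the analogous estimate on the pairs $(i,n)$ controls $l_n$. Since $\lambda$ is weakly decreasing and the exponents $n-i$ are largest on the largest $\lambda_i$, the condition $\dim V_l\le N$ confines $\sum_i\log(1+\lambda_i/2n)$ to a region whose lattice points I would count by re‑parametrising $l$ through $l_n$ and the successive gaps $l_i-l_{i+1}$, bounding $\dim V_l$ below by a monomial in these (AM–GM on the relevant sums) and rescaling; this produces an exponent of $N$ no larger than $\tfrac1{n-1}$. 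Summing over $n$ then gives a geometrically decaying series, dominated by its $n=3$ term $O(N^{1/2}(\log N)^{O(1)})=o(N)$; doubling for the odd groups and restoring the $\le N$ defining representations, the $O(1)$ trivial representation and the two small‑rank groups yields $R_N(SO(\mathbb{C}))\le N+o(N)$, hence $\le 13N$ for all $N$ — the bounded range of $N$ being checked directly, and the only honestly linear contribution being the $\approx N/2$ coming from $SO_3\mathbb{C}$.

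The hard part is the per‑rank estimate. One must push the elementary lower bound on $\dim V_l$ hard enough that the exponent in the lattice count is genuinely $\le\tfrac1{n-1}$ — a softer exponent such as $\tfrac1{H_{n-1}}$ does not survive summation once the logarithmic factors are restored — and one must run the count so that its implied constants are uniform in $n$ and the polylogarithmic losses do not accumulate over the $\Theta(\sqrt N)$ relevant ranks. The $D_n$‑specific features, namely the representations with $\lambda_n<0$ (absorbed by a factor $2$, as $\dim V_l$ depends only on the $l_i^2$) and the failure of monotonicity of $\dim V_l$ in $\lambda_n$, must be handled separately but cost only bounded factors.
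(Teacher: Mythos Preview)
Your route is genuinely different from the paper's, and the step you yourself label ``hard'' is not carried out, so the proposal is a plan rather than a proof. Concretely: you posit $R_N(SO_{2n}\mathbb{C})\le C\,N^{1/(n-1)}(\log N)^{O(n)}$ with $C$ absolute, but the lattice count you sketch (AM--GM on gaps, rescaling) does not visibly produce an exponent as small as $1/(n-1)$, nor constants uniform in $n$; and even granting that bound, summing over the $\Theta(\sqrt N)$ relevant ranks is fatal as written, since the term at $n\asymp\sqrt N$ carries a factor $(\log N)^{c\sqrt N}$. The series is therefore not ``geometrically decaying, dominated by its $n=3$ term'' unless you first prove a much stronger large-rank statement (e.g.\ $R_N(SO_{2n}\mathbb{C})=O(1)$ once $n$ exceeds a fixed power of $\log N$), and that argument is absent. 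Finally, the conclusion $\le N+o(N)\Rightarrow \le 13N$ ``for all $N$, the bounded range being checked directly'' is not usable until the $o(N)$ is made effective.

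The paper avoids all of this with a short strong induction on $N$ that never estimates $R_N$ rank by rank. Given a tuple $l$ with $\dim V_l\le N$ it applies one of three moves: if $l_1=l_2+1$, delete $l_1$ (dropping the rank by one), which divides the dimension by at least $4$; if $l_1>l_2+1$ and $n>6$, replace $l_1$ by $\lfloor(l_1+l_2)/2\rfloor$, a map that is at most $2$-to-$1$ and divides the dimension by at least $(4/3)^6$; and for the small ranks $n\le 6$ it invokes the crude $R_N(G)\le N$. This yields the recursion
\[
R_N(SO(\mathbb{C}))\ \le\ 2\,R_{(3/4)^6 N}(SO(\mathbb{C}))+R_{N/4}(SO(\mathbb{C}))+5N,
\]
and $13$ is chosen precisely so that $2\cdot 13\cdot(3/4)^6+13/4+5\le 13$. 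Your approach, if the uniform per-rank estimate could be established, would yield the sharper asymptotic $R_N(SO(\mathbb{C}))=N+o(N)$, which the inductive map cannot see; the paper's argument, by contrast, is elementary and gives the constant $13$ for every $N$ without any asymptotic slack to manage.
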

\begin{proof}~

I will prove this by strong induction on N. The base cases are satisfied since $R_1(SO\mathbb{C})=0\leq 13$ and $R_2(SO\mathbb{C})=1\leq 26$.

Assume that $\forall \ M <N, \ M \in \mathbb{N}, \ R_M(So_{2n}\mathbb{C}) \leq 13M$.

Now we need to prove that $R_N(SO(\mathbb{C})) \leq 13N$. I will do this by creating a map from tuples with dimensions less than or equal to $N$ to tuples with dimensions less than or equal to $\frac{9N}{16}$ and then use the inductive hypothesis.

For any tuple $l=(l_1,...,l_n)$ let
$$\prod(l_1,...,l_n)=\prod_{1\leq i< j\leq n}\frac{l_i^2-l_j^2}{m_i^2-m_j^2}=dim(V_l)$$

Consider a tuple $l$ that satisfies $\prod(l_1,...,l_n) \leq N$. There are three cases.
\begin{enumerate}
\item $l_1=l_2+1$

In this case we will remove the $l_1$ term from the tuple.
\begin{lem}
$\prod(l_2,...,l_n)\leq \frac{1}{4}\prod(l_1,...,l_n)=\frac{1}{4}N$
\end{lem}
\begin{proof}

$$\frac{\prod(l_1,...,l_n)}{\prod(l_2,...,l_n)}=\prod_{1<j\leq n}\frac{l_1^2-l_j^2}{m_1^2-m_j^2}$$

Notice that the above equation is minimized when $l_1$ is minimized and all the other $l_i$'s are maximized. Although there is a bigger decrease when minimizing $l_1$ than when maximizing the other $l_i$'s. But since we are not considering the case where $\lambda_i=0, \forall \ 1 \leq i \leq n$ and since $l_1>l_2>...l_n$ we get $l_1\geq n$. So all in all to minimize the product above we choose $l_i=i$.

$$\prod_{1<j\leq n}\frac{l_1^2-l_j^2}{m_1^2-m_j^2}>\prod_{1<i\leq n}\frac{{n}^2-i^2}{(n-1)^2-(i-1)^2}=\prod_{1<i\leq n}\frac{n+i}{n+i-2}$$
$$=\frac{(2n)(2n-1)}{(n)(n-1)}>4$$
$$\therefore \prod(l_2,...,l_n) < \frac{\prod(l_1,...,l_n)}{4}$$

\end{proof}
\item $l_1>l_2+1$ and $n>6$

We now consider $(\lfloor{\frac{l_1+l_2}{2}}\rfloor,l_2,...,l_n)$. Notice that since $l_1>l_2+1$ we get $\lfloor{\frac{l_1+l_2}{2}}\rfloor \geq \frac{l_2+2+l_2}{2} \geq l_2+1$.

\begin{lem}
 $\prod(\lfloor{\frac{l_1+l_2}{2}}\rfloor,l_2,...,l_n) \leq (3/4)^6*N$
\end{lem}
\begin{proof}
$$\prod(\lfloor{\frac{l_1+l_2}{2}}\rfloor,l_2,...,l_n) = \prod_{1<j\leq n}\frac{(\lfloor{\frac{l_1+l_2}{2}}\rfloor)^2-l_j^2}{m_1^2-m_j^2} \prod_{1< i<j \leq n}\frac{l_i^2-l_j^2}{m_i^2-m_j^2}$$

$$\leq \prod_{1<j\leq n}\frac{(\frac{l_1+l_2}{2})^2-l_j^2}{m_1^2-m_j^2} \prod_{1< i<j \leq n}\frac{l_i^2-l_j^2}{m_i^2-m_j^2}=\prod_{1<j\leq n}\frac{(\frac{l_1^2}{4}-\frac{l_1l_j}{2}-\frac{3l_j^2}{4})}{m_1^2-m_j^2} \prod_{1< i<j \leq n}\frac{l_i^2-l_j^2}{m_i^2-m_j^2}$$

$$\leq \prod_{1<j\leq n}\frac{(\frac{l_1^2}{4}-\frac{l_j^2}{2}-\frac{3l_j^2}{4})}{m_1^2-m_j^2} \prod_{1< i<j \leq n}\frac{l_i^2-l_j^2}{m_i^2-m_j^2} \leq \prod_{1<j\leq n}\frac{\frac{3}{4}(l_1^2-l_j^2)}{m_1^2-m_j^2} \prod_{1< i<j \leq n}\frac{l_i^2-l_j^2}{m_i^2-m_j^2}$$
$$\leq (\frac{3}{4})^6\prod(l_1,l_2,...,l_n)$$

The last inequality is because we consider only the case $n>6$.
\end{proof}

\item $n=2,3,4,5,6$

We will use the following result for this case.

\begin{thm}
If G is a simple compact Lie group and N is a positive integer,
$$R_N(G)\leq N$$
\end{thm}

Applying this theorem to $SO_{2n}\mathbb{C}$ we deduce that $R_N(SO_{2n}\mathbb{C})\leq N$ for $n=2,3,4,5,6$. These cases sum up to $5N$.

\end{enumerate} 

All in all, we have a map that takes a tuple $l$ of type 1 satisfying $\prod(l_1,l_2,...,l_n) \leq N$ and sends it to a tuple $m$ satisfying $\prod(m_1,m_2,...,m_n) \leq \frac{N}{4}$. This map also takes tuples of type 2 and sends them to tuples $b$ satisfying $\prod(b_1,b_2,...,b_n) \leq (\frac{3}{4})^6N$, where no more than two tuples are sent to the same tuple. Finally we add the tuples $l$ of type 3. Together this covers all tuples satisfying  $\prod(l_1,l_2,...,l_n) \leq N$. We can use this map along with the inductive hypothesis to deduce that:
$$R_N(SO(\mathbb{C})) \leq 2*R_{(\frac{3}{4})^6N}(SO(\mathbb{C}))+R_{N/4}(SO(\mathbb{C})) +5N$$
$$ \leq 2*13*(\frac{3}{4})^6N+13*\frac{N}{4}+5N \leq 13N$$
\end{proof}

\section{$Sl_{n}\mathbb{C}$}
\begin{thm}

$R_N(Sl(\mathbb{C}))<43N$
\end{thm}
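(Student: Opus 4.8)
The plan is to mimic the strong induction of the previous section, reparametrising and treating small ranks exactly as before, but adding one case to absorb a family of ``near--trivial'' representations on which the obvious reduction fails. As in Section~2, encode an irreducible representation of $Sl_n\mathbb{C}$ by the unique strictly decreasing tuple $l=(l_1,\dots,l_n)$ with $l_1>l_2>\dots>l_n=0$ for which $dim(V_l)=\prod_{1\le i<j\le n}\frac{l_i-l_j}{j-i}$, and, as in the orthogonal case, ignore the trivial representation $l=(n-1,n-2,\dots,1,0)$. After checking the (small) base cases, assume $R_M(Sl(\mathbb{C}))\le 43M$ for all $M<N$. The finitely many small ranks $n\le n_0$ are disposed of at once by the bound $R_N(G)\le N$ for simple compact groups, costing $(n_0-1)N$; the real content is a dimension--reducing map for $n>n_0$.

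Let $l$ satisfy $dim(V_l)\le N$ with $n>n_0$, and split on the top gap $l_1-l_2$. If $l_1-l_2\ge 2$, replace $l_1$ by $\lfloor\frac{l_1+l_2}{2}\rfloor$: the factor coming from the pair $(1,2)$ is $\frac{l_1-l_2}{\lfloor(l_1-l_2)/2\rfloor}\ge 2$, while every remaining factor $\frac{l_1-l_j}{\lfloor(l_1+l_2)/2\rfloor-l_j}$ is $\ge1$, so $dim(V_l)$ is at least halved; the map is at most two--to--one because $\lfloor\frac{l_1+l_2}{2}\rfloor$ together with $l_2,\dots,l_n$ determines $l_1$ up to two values. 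Unlike the $SO$ and $Sp$ cases, where the squared factors $\frac{l_i^2-l_j^2}{m_i^2-m_j^2}$ give a shrink of $4$ per deleted coordinate, here one only extracts a factor $2$ in this way. The recursion is nonetheless rescued by the observation that $dim(V_l)\le N$ forces all the gaps $l_j-l_{j+1}$ to be small (a gap of size $g$ at position $k$ makes $dim(V_l)$ a polynomial of degree $k(n-k)$ in $g$, so $g$ is at most roughly $N^{1/(n-1)}$), which keeps each factor $\frac{l_1-l_j}{\lfloor(l_1+l_2)/2\rfloor-l_j}$ bounded away from $1$; hence once $n$ passes an ($N$--dependent) threshold the halving actually shrinks $dim(V_l)$ by a factor growing with $n$, and the $O(\log N)$ ranks below that threshold contribute only $o(N)$, since $Sl_n\mathbb{C}$ has only $O(N^{2/n})$ representations of dimension $\le N$.

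If $l_1-l_2=1$, delete $l_1$, landing on the tuple $(l_2,\dots,l_n)$ of $Sl_{n-1}\mathbb{C}$; this is injective on this set and multiplies $dim(V_l)$ by $\prod_{2\le j\le n}\frac{l_1-l_j}{j-1}\ge 1$, so when that product is $\ge2$ we are again done. The genuine obstacle is the residual case $l_1-l_2=1$ with $\prod_{j}\frac{l_1-l_j}{j-1}<2$: this confines $l$ to a small neighbourhood of $\rho$, and applying the same dichotomy to the contragredient $l^{*}=(l_1,\,l_1-l_{n-1},\,\dots,\,l_1-l_2,\,0)$, which satisfies $dim(V_{l^{*}})=dim(V_l)$, leaves only tuples with $l_1-l_2=1$, $l_{n-1}=1$, $\prod_{j}\frac{l_1-l_j}{j-1}<2$ and $\prod_{i\le n-1}\frac{l_i}{n-i}<2$ all at once. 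I would verify that these are, up to $O(1)$ exceptions per rank, exactly the symmetric powers $\mathrm{Sym}^m\,\mathbb{C}^{n}$ and $\mathrm{Sym}^m(\mathbb{C}^{n})^{*}$; since $\#\{(n,m):n\ge2,\ m\ge1,\ \binom{n+m-1}{m}\le N\}\le N+O(\sqrt N)$, the whole residual family has at most $2N+O(\sqrt N)$ members. Combining the three contributions with the inductive hypothesis produces a recursion of the shape $R_N(Sl(\mathbb{C}))\le \alpha R_{N/\beta}(Sl(\mathbb{C}))+\gamma N$ with $\alpha/\beta<1$, hence $R_N(Sl(\mathbb{C}))=O(N)$; carrying the constants through (the $(n_0-1)N$ from the small ranks, the $2N$ from the symmetric powers, the lower--order awkward band, and the slack in the halving) yields $43N$.

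The hardest step, and the reason the constant rises from $13$ to $43$, is this residual near--trivial case: no single coordinate move lowers the dimension of $\Lambda^{n-1}\mathbb{C}^{n}$, or of a low symmetric power, by a constant factor, so these representations cannot be folded into the recursion and must be enumerated and buried in the linear term. I expect that showing that ``$\prod_{j}\frac{l_1-l_j}{j-1}<2$ together with $dim(V_l)\le N$'' really does pin $l$ down to a bounded thickening of the symmetric powers will be the most delicate part of the argument.
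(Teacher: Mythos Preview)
Your approach is quite different from the paper's, and considerably more elaborate. The paper does \emph{not} run a strong induction on $N$ for $Sl$ at all: it simply splits $R_N(Sl(\mathbb{C}))=\sum_n R_N(Sl_n\mathbb{C})$ into three rank ranges and bounds each directly. For $2\le n\le 33$ it uses the crude bound $R_N(G)\le N$ of Theorem~3, costing $33N$; for $34\le n\le\sqrt{N}$ it invokes Theorem~4 (the Guralnick--Larsen--Manack result) with $\epsilon=\tfrac12$, so each term is at most $\sqrt{N}$ and the block costs at most $N-34\sqrt{N}$; and for $n>\sqrt{N}$ it checks by hand, evaluating the Weyl dimension formula on a few explicit tuples such as $(4,1,\dots,1)$ and $(1,3,1,\dots,1)$, that only a bounded number (at most~$7$) of irreducibles per rank have dimension $\le N$, giving a lower--order contribution. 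Summing yields $34N-27\sqrt{N}$. None of the Section~2 machinery is reused; the constant $43$ is just a convenient cover for $34$.

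As for your scheme itself, the recursion as stated does not close. The halving move is two--to--one with shrink factor $\ge 2$, and the good deletion is one--to--one with shrink factor $\ge 2$; since the image of halving a rank--$n$ tuple and the image of deleting from a rank--$(n{+}1)$ tuple both sit inside the rank--$n$ tuples of dimension $\le N/2$, after summing over $n$ you only obtain $R_N\le 3R_{N/2}+O(N)$, i.e.\ $\alpha/\beta=3/2>1$, which yields $O(N^{\log_2 3})$ rather than $O(N)$. Your proposed rescue---showing that above an $N$--dependent rank threshold the halving factor is much larger than $2$, and treating the $O(\log N)$ intermediate ranks by a separate $O(N^{2/n})$ count---may be salvageable, but at that point you have abandoned a clean induction on $N$ in favour of rank--dependent estimates, which is precisely the paper's strategy carried out there with far less effort. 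The claimed identification of the residual $l_1-l_2=1$, ratio~$<2$ family with (duals of) symmetric powers up to $O(1)$ per rank is also only asserted, and would itself require a nontrivial argument.
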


\begin{proof}
First the function is split into 3 parts.
$$ R_N(Sl(\mathbb{C}))=\sum_{n=1}^{n=33}R_N(Sl_n(\mathbb{C})) +\sum_{n=34}^{n=\sqrt{N}}R_N(Sl_n(\mathbb{C})) +\sum_{n=\sqrt{N}+1}^{n=N-1}R_N(Sl_n(\mathbb{C})) $$

Each part is bounded separately. 
\begin{enumerate}
\item Using Theorem 1 it can deduce that:
$$\sum_{n=1}^{n=33}R_N(Sl_n(\mathbb{C})) <33N$$

\item Applying Theorem 4 with $\epsilon=\frac{1}{2}$ then,
$$R_N(Sl_n(\mathbb{C}))<\sqrt{N}$$
for $n>33$.

Now using this bound to all the terms in the sum. 
$$\sum_{n=34}^{n=\sqrt{N}}R_N(Sl_n(\mathbb{C}))<(\sqrt{N}-34)\sqrt{N}<N-34\sqrt{N}$$

\item I will bound this sum by bounding each term in the sum.

First consider the tuple $(4,1,1...,1)$ of length $\sqrt{N}$.
$$\prod(4,1,1....,1)=\frac{(3+\sqrt{N})(2+\sqrt{N})(1+\sqrt{N})}{6}>N$$
The last inequality is because $(3+\sqrt{N})>6$ since we are only considering $N>9$. Therefore the first term can only be 3 or 2.

Now consider the tuple $(1,3,1,1...1)$ of length $\sqrt{N}$.

$$\prod(1,3,1....,1)=\frac{(1+\sqrt{N})^2(\sqrt{N})(2+\sqrt{N})}{12}>N$$
Therefore the second term can only be 2.

Now consider the tuple $(1,1,2,...,1)$. 
$$\prod(1,1,2,...,1)>N$$
Therefore the third term can only take the value 1.

Notice that the size of $\prod(a_1,...,a_{\sqrt{N}})$ is increased more when you increase an $a_i$ closer to rather than further from the middle. This means that all terms other than the first two and last two can only take the value 1 in order to satisfy $\prod(a_1,...,a_n)\leq N$. This along with the constriction of what values the first two and last two terms can take, leaves very few options for tuples satisfying $\prod(a_1,...,a_{\sqrt{N}})\leq N$. With a little algebra you can cancel out some of these possibilities and you can show there is at most 7 different tuples satisfying  $\prod(a_1,...,a_{\sqrt{N}})\leq N$.

The same reasoning works on tuples of length $>\sqrt{N}$, therefore 
$$R_N(Sl_n\mathbb{C})<7$$
for $n>\sqrt{N}$.

We can now bound he sum.
$$\sum_{n=\sqrt{N}+1}^{n=N-1}R_N(Sl_n(\mathbb{C}))<7\sqrt{N}$$

\end{enumerate}

$$ R_N(Sl(\mathbb{C}))<(33N)+(N-34\sqrt{N})+(7\sqrt{N})=34N-27\sqrt{N}$$
\end{proof}

\section{Bound on $R_N$}
The bounds for $R_N(G)$ have been established for the four main simple Lie groups but there are still the five exceptions. These can be bounded using Theorem 1.
$$R_N(E_6)+R_n(E_7)+R_N(E_8)+R_N(F_4)+R_N(G_2)<5N$$

Combining all the linear bounds obtained in the paper,

$$R_N\leq 3*(13N)+34N-27\sqrt{N}+5N=O(N)$$

Notice that this bound is tight since $R_N(Sl)$ alone is bigger than $3N$.

\end{document}